\newlength{\standardunitlength}
\newtheorem{prop}{Proposition}[section]
\newtheorem{lemma}[prop]{Lemma}
\newtheorem{theorem}[prop]{Theorem}
\begin{document}

\title [Representation theory] {Representation theory and central limit theorems for traces of commutators for compact Lie groups}

\author{Jason Fulman}
\address{Department of Mathematics, University of Southern California, Los Angeles, CA 90089-2532, USA }
\email{fulman@usc.edu}



\date{February 12, 2025}

\begin{abstract} There has been some work in the literature on limit theorems for the trace of
commutators for compact Lie groups. We revisit this from the perspective of combinatorial representation theory.
\end{abstract}

\maketitle

\section{Introduction}

Classic work of Diaconis and Shahshahani \cite{DS} proved central limit theorems for the trace of Haar distributed
elements of the compact Lie groups $U(n,C), Sp(2n,C), O(n,R)$. Analogous results for traces of commutators have
been obtained by Palheta, Barbosa, and Novaes \cite{PBN} and others \cite{Ra}, \cite{MSS}, \cite{MiP}, \cite{Re} using
tools such as the Weingarten calculus and free probability theory. We revisit these results on commutators from
the viewpoint of combinatorial representation theory. Most of our results are known, but our tools are different and
in our opinion worth recording.

We should mention that our technique makes heavy use of representation theory and does not apply to general
words. For a guide to the literature on more general words, see the breakthrough papers \cite{MP1}, \cite{MP2},
\cite{Vo} and the references therein. We also note that the results in this paper can be regarded as a ``compact Lie group
analog'' of some of the results in our earlier paper \cite{Fu}. Whereas \cite{Fu} studied fixed points of non-uniform permutations,
the current paper studies traces of non-Haar distributions on compact Lie groups.

This paper is organized as follows. Section \ref{prel} gives needed preliminaries on commutators and characters
of compact Lie groups. Everything in this section is known, but we have been unable to find in the literature proofs of
two of the crucial tools (Lemma \ref{second} and Theorem \ref{com}), so we include proofs. Section
\ref{symplec} proves a central limit theorem for the trace of products of commutators for the symplectic groups.
Section \ref{orthogr} gives an analog for orthogonal groups and Section \ref{unitgr} gives an analog for
unitary groups. We suspect, but are not certain, that the central limit theorems in these sections are known, so to be
safe we do not claim that the results are original and only claim that the approach (using results of Sundaram
\cite{Su} and Stembridge \cite{Stem} in the algebraic combinatorics literature) is new.

To close the introduction we mention some notation to be used throughout this paper. We let $G$ be a compact Lie group
and all integrals over $G$ are with respect to Haar measure. We let $\chi$ denote an irreducible character of $G$ and let
$d_{\chi}$ denote its dimension. We let $[x,y]$ denote the commutator $x^{-1}y^{-1}xy$. For an integer partition $\lambda$,
we let $l(\lambda)$ denote its number of parts.

\section{Preliminaries} \label{prel}

The results in this section will be useful for studying the products of commutators. These results are all known, but we have been unable to find proofs of Lemma \ref{second} and  Theorem \ref{com} in the literature. Since we use these results crucially, we include proofs.

Lemma \ref{fir} is proved on page 392 of \cite{He}. 

\begin{lemma} \label{fir} Let $G$ be a compact Lie group and $\chi$ an irreducible character of $G$. Then for all
$\alpha,g \in G$
\[ \int_G \chi(y^{-1} \alpha y g) dy = \frac{\chi(\alpha) \chi(g)}{d_{\chi}}.\]
\end{lemma}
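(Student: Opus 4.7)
The plan is to reduce the integral identity to a statement about the associated irreducible representation and then apply Schur's lemma. Let $\rho \colon G \to \GL(V)$ be an irreducible unitary representation whose character is $\chi$, so $d_\chi = \dim V$. Since $\chi$ is a trace, it suffices to prove the matrix-valued refinement
\[
\int_G \rho(y^{-1} \alpha y g) \, dy \; = \; \frac{\chi(\alpha)}{d_\chi}\, \rho(g),
\]
since taking the trace of both sides and using $\mathrm{tr}\,\rho(g) = \chi(g)$ gives the lemma.

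The first step is to pull $\rho(g)$ out on the right: by the homomorphism property, the left-hand side equals $T(\alpha)\rho(g)$, where
\[
T(\alpha) \; := \; \int_G \rho(y)^{-1} \rho(\alpha)\, \rho(y)\, dy.
\]
The second step is to show that $T(\alpha)$ is a $G$-intertwiner of $\rho$. For any $h \in G$, translating the dummy variable $y \mapsto yh$ and using right-invariance of Haar measure yields $\rho(h)^{-1} T(\alpha) \rho(h) = T(\alpha)$, i.e.\ $T(\alpha)\rho(h) = \rho(h) T(\alpha)$.

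The third step invokes Schur's lemma for the irreducible representation $\rho$: the operator $T(\alpha)$ must be a scalar multiple of the identity, say $T(\alpha) = c(\alpha) I_V$. To identify $c(\alpha)$, take the trace: by cyclicity of the trace inside the integrand, $\mathrm{tr}\, T(\alpha) = \int_G \chi(\alpha)\, dy = \chi(\alpha)$, while $\mathrm{tr}(c(\alpha) I_V) = c(\alpha) d_\chi$. Hence $c(\alpha) = \chi(\alpha)/d_\chi$, and the matrix identity follows.

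There is no real obstacle here beyond checking that the standard finite-group argument carries over to compact Lie groups, which it does: Haar measure provides the translation-invariant integral needed for the change-of-variables step, and Schur's lemma applies verbatim to finite-dimensional continuous irreducible representations of a compact group. The only minor care is to work with a continuous (hence finite-dimensional) irreducible $\rho$ realizing $\chi$, which exists by the Peter--Weyl theorem.
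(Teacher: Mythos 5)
Your proof is correct and complete; it is the standard Schur's lemma argument (form the intertwiner $T(\alpha)=\int_G \rho(y)^{-1}\rho(\alpha)\rho(y)\,dy$, use bi-invariance of Haar measure to show it commutes with $\rho$, identify the scalar by taking traces). The paper does not prove this lemma itself but simply cites page 392 of Helgason, where essentially this same argument appears, so there is nothing to flag.
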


We encountered Lemma \ref{second} in Kefeng Liu's paper \cite{Ke}. Kefeng Liu (personal communication) has provided the following
proof due to his student Tiancheng Xia.

\begin{lemma} \label{second} Let $G$ be a compact Lie group and $\chi$ an irreducible character of $G$. Then for all $w \in G$,
\[ \int_G \chi(wy) \chi(y^{-1}) dy = \frac{\chi(w)}{d_{\chi}}.\]
\end{lemma}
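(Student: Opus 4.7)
The plan is to reduce the identity to the Schur orthogonality relations for matrix coefficients, via a convolution interpretation. By the left-invariance of Haar measure, the substitution $z = wy$ converts the integral to $\int_G \chi(z)\chi(z^{-1}w)\,dz$, which is the convolution $(\chi * \chi)(w)$ of the irreducible character with itself. The task therefore reduces to the classical computation that $\chi * \chi = \chi/d_{\chi}$ for irreducible $\chi$.

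To carry out this computation, I pick a unitary irreducible representation $\rho$ affording $\chi$, which exists because every continuous finite-dimensional representation of a compact Lie group is equivalent to a unitary one. Expanding $\chi(z) = \sum_i \rho(z)_{ii}$ and $\chi(z^{-1}w) = \sum_{k,l}\rho(z^{-1})_{kl}\rho(w)_{lk}$, then using unitarity in the form $\rho(z^{-1})_{kl} = \overline{\rho(z)_{lk}}$ together with the Schur orthogonality relation
\[ \int_G \rho(z)_{ii}\,\overline{\rho(z)_{lk}}\, dz = \frac{1}{d_{\chi}}\,\delta_{il}\,\delta_{ik}, \]
the triple sum collapses to $\frac{1}{d_{\chi}}\sum_i \rho(w)_{ii} = \chi(w)/d_{\chi}$, which is the desired identity.

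There is no real obstacle here; the argument is a textbook application of Schur orthogonality, and in fact the same template yields Lemma \ref{fir} as well. The one point requiring care is simply using unitarity to pair matrix coefficients against their complex conjugates, which is what enables the orthogonality integral to be applied. As an alternative to the convolution route, one could argue directly that $f(w) := \int_G \chi(wy)\chi(y^{-1})\,dy$ is a class function of $w$ (by the substitution $y \mapsto g^{-1}yg$ in the integrand and invariance of Haar measure), expand $f$ in irreducible characters of $G$, and invoke orthogonality to see that only the $\chi$-component survives with coefficient $1/d_{\chi}$.
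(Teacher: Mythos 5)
Your argument is correct and is essentially the paper's own proof in different notation: after the harmless substitution $z=wy$, you expand both characters into matrix coefficients of a unitarized representation and collapse the sum with Schur orthogonality, which is exactly what the paper does using the inner-product form $\int_G \langle y^{-1}v,\alpha\rangle\langle y\beta,w\rangle\,dy = \frac{1}{d_\chi}\langle\beta,\alpha\rangle\langle v,w\rangle$ from Br\"ocker--tom Dieck. The index bookkeeping in your orthogonality step checks out, so there is nothing to add.
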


\begin{proof} Let $<,>$ be a $G$-invariant inner product on the space $V$ corresponding to $\chi$, and let $v_1,\cdots,v_{d_{\chi}}$
be an orthonormal basis with respect to $<,>$. 

Then
\begin{eqnarray*}
\int_G \chi(wy) \chi(y^{-1}) dy & = & \int_G \sum_i <wyv_i,v_i> \sum_j <y^{-1}v_j,v_j> dy \\
& = & \int_G \sum_i <yv_i,w^{-1}v_i> \sum_j <y^{-1}v_j,v_j> dy \\
& = & \sum_{i,j} \int_G <yv_i,w^{-1}v_i> <y^{-1}v_j,v_j> dy.
\end{eqnarray*}

Part 2 of Theorem 4.5 of \cite{BD} gives the orthogonality relation
\[ \int_G <y^{-1}v,\alpha> <y \beta,w> dy = \frac{1}{d_{\chi}} <\beta,\alpha> <v,w> \] for
any $\alpha,\beta,v,w$.
Thus
\begin{eqnarray*}
\int_G \chi(wy) \chi(y^{-1}) dy & = & \frac{1}{d_{\chi}} \sum_{i,j} <v_i,v_j> <v_j,w^{-1}v_i> \\
& = & \frac{1}{d_{\chi}} \sum_i <v_i,w^{-1}v_i> \\
& = & \frac{1}{d_{\chi}} \sum_i <wv_i,v_i> = \frac{\chi(w)}{d_{\chi}}.
\end{eqnarray*}
\end{proof}

As a consequence of these lemmas, we obtain the following theorem stated in \cite{Ke}.

\begin{theorem} \label{com} Let $G$ be a compact Lie group and $\chi$ an irreducible character of $G$. Then
\[ \int_{G^{2k}} \chi([y_1,z_1] \cdots [y_k,z_k]) dy_1 dz_1 \cdots dy_k dz_k = \frac{1}{d_{\chi}^{2k-1}}.\] 
\end{theorem}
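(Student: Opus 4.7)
The plan is to prove the identity by induction on $k$, peeling off one commutator at a time. The base case reduces to a computation that Lemmas \ref{fir} and \ref{second} dispatch immediately, and the inductive step uses exactly the same two lemmas, since the algebraic operation of ``removing a commutator'' is essentially independent of how many commutators remain.

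For the inductive step, write $w = [y_2, z_2] \cdots [y_k, z_k]$ and Fubini the integral so that we first integrate over $(y_1, z_1)$ with $w$ held fixed. The inner integral is
\[
\int_{G^2} \chi(y_1^{-1} z_1^{-1} y_1 z_1 w)\, dy_1\, dz_1.
\]
First I would apply Lemma \ref{fir} to the $y_1$-integral, taking $\alpha = z_1^{-1}$ and $g = z_1 w$, to obtain $\chi(z_1^{-1}) \chi(z_1 w)/d_\chi$. Then, using that $\chi$ is a class function to rewrite $\chi(z_1 w) = \chi(w z_1)$, I would apply Lemma \ref{second} to the remaining $z_1$-integral to get $\chi(w)/d_\chi$. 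Combining gives
\[
\int_{G^2} \chi([y_1, z_1] w)\, dy_1\, dz_1 = \frac{\chi(w)}{d_\chi^{2}}.
\]
Substituting this back, the $2k$-fold integral collapses to $d_\chi^{-2}$ times the analogous $2(k-1)$-fold integral, i.e.\ $I_k = I_{k-1}/d_\chi^2$.

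For the base case $k = 1$, the same computation (with $w = e$, so $\chi(w) = d_\chi$) yields $I_1 = 1/d_\chi$, which matches $1/d_\chi^{2 \cdot 1 - 1}$. Iterating the recursion gives $I_k = I_1/d_\chi^{2(k-1)} = 1/d_\chi^{2k-1}$, as required.

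I do not anticipate a real obstacle: the entire argument is just a careful telescoping of the two preliminary lemmas, with the only subtle point being the cyclic trick $\chi(z_1 w) = \chi(w z_1)$ that rearranges the inner integrand into the exact shape demanded by Lemma \ref{second}. The approach in fact shows that the only structural facts used are the two orthogonality-type identities in Lemmas \ref{fir} and \ref{second} together with the class function property, and nothing about the specific compact group $G$.
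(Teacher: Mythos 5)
Your proof is correct and follows essentially the same route as the paper: induction on $k$, applying Lemma \ref{fir} to the $y_1$-integral, using the class-function property to cycle $\chi(z_1 w)$ into $\chi(w z_1)$, and then applying Lemma \ref{second} to the $z_1$-integral to obtain the recursion $I_k = I_{k-1}/d_\chi^2$. The only cosmetic difference is that you derive the base case by specializing the inner-integral computation to $w=e$, whereas the paper computes $k=1$ separately with the same two lemmas.
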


\begin{proof} We proceed by induction on $k$. For the case $k=1$, 
\[ \int_{G^2} \chi(y^{-1}z^{-1}yz) dy dz = \frac{1}{d_{\chi}} \int_G \chi(z^{-1}) \chi(z) dz = \frac{1}{d_{\chi}},\]
where the equalities used Lemmas \ref{fir} and \ref{second}. 

Now we compute

\begin{eqnarray*}
& & \int_{G^{2k}} \chi([y_1,z_1] \cdots [y_k,z_k]) dy_1 dz_1 \cdots dy_k dz_k \\
& = & \frac{1}{d_{\chi}} \int_{G^{2k-1}} \chi(z_1^{-1}) \chi(z_1 [y_2,z_2] \cdots [y_k,z_k]) dz_1 dy_2dz_2 \cdots dy_k dz_k \\
& = &  \frac{1}{d_{\chi}} \int_{G^{2k-1}} \chi(z_1^{-1}) \chi([y_2,z_2] \cdots [y_k,z_k] z_1) dz_1 dy_2dz_2 \cdots dy_k dz_k \\
& = & \frac{1}{d_{\chi}^2} \int_{G^{2k-2}} \chi([y_2,z_2] \cdots [y_k,z_k]) dy_2 dz_2 \cdots dy_k dz_k \\
& = & \frac{1}{d_{\chi}^2} \frac{1}{d_{\chi}^{2(k-1)-1}} = \frac{1}{d_{\chi}^{2k-1}}.
\end{eqnarray*} The first equality used Lemma \ref{fir}, the third equality used Lemma \ref{second}, and the fourth equality used
the induction hypothesis.
\end{proof}

{\it Remark:} Theorem \ref{com} has an exact analog for finite groups. Indeed, for $G$ a finite group with irreducible
character $\chi$,
\[ \frac{1}{|G|^{2k}} \sum_{y_1,\cdots,y_k \atop z_1,\cdots,z_k} \chi([y_1,z_1] \cdots [y_k,z_k]) = \frac{1}{d_{\chi}^{2k-1}}.\]
This can be deduced from Theorem A.1.10 in Zagier's appendix to the book \cite{LZ}. This means that
Theorem 3.3 in \cite{Fu}, which proves a Poisson(1) limit theorem for the number of fixed points of a random commutator
in the symmetric group, extends to a Poisson limit for the product of commutators.

\section{Symplectic groups} \label{symplec}

Throughout this section $G=Sp(2n,C)$. The following definition is from Sundaram \cite{Su}.

{\bf Definition:} For $\lambda$ a partition of $n$, an up-down tableau of shape $\lambda$ and length $r$ is a sequence of shapes
$\emptyset=\lambda^0, \lambda^1,\cdots,\lambda^r=\lambda$ such that any two consecutive shapes differ by exactly one box.
We let $f_r^{\lambda}$ be the number of up-down tableaux of shape $\lambda$ and length $r$. We also define $f_r^{\lambda}(n)$
to be the number of up-down tableaux of shape $\lambda$ and length $r$ such that every shape in the sequence has length
at most $n$.

The following theorem is from Sundaram \cite{Su}.

\begin{theorem} \label{sec} For all integers $n \geq 1$ and $r \geq 0$,
\[ (x_1+x_1^{-1}+\cdots+x_n+x_n^{-1})^r = \sum_{\lambda \atop l(\lambda) \leq n} f_r^{\lambda}(n)
\chi_{sp}^{\lambda} (x_1^{\pm 1},\cdots,x_n^{\pm 1}), \] where $\chi_{sp}^{\lambda}$ is the irreducible character of
$Sp(2n,C)$ corresponding to $\lambda$ and $l(\lambda)$ is the length of $\lambda$.
\end{theorem}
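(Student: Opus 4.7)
The plan is to view the left-hand side as the character of the $r$-fold tensor power of the defining representation and then argue by induction on $r$ using the symplectic Pieri rule.

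First I would observe that $x_1+x_1^{-1}+\cdots+x_n+x_n^{-1}$ is the character of the defining representation $V$ of $Sp(2n,\mathbb{C})$ evaluated at a maximal torus element with eigenvalues $x_1^{\pm 1},\ldots,x_n^{\pm 1}$. The identity to be proved is therefore equivalent to the decomposition
\[ V^{\otimes r} \;\cong\; \bigoplus_{\lambda:\, l(\lambda)\leq n} f_r^\lambda(n)\, W^\lambda, \]
where $W^\lambda$ denotes the irreducible $Sp(2n,\mathbb{C})$-representation with character $\chi_{sp}^\lambda$.

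The engine is the symplectic Pieri rule: for any partition $\mu$ with $l(\mu)\leq n$,
\[ W^\mu \otimes V \;\cong\; \bigoplus_\nu W^\nu, \]
where $\nu$ ranges over the partitions obtained from $\mu$ by either adding or removing a single box, subject to $l(\nu)\leq n$. Granting this, induction on $r$ is immediate. The base case $r=0$ is trivial: $V^{\otimes 0}$ is the trivial representation $W^\emptyset$ and $f_0^\emptyset(n)=1$. For the inductive step I would tensor the stage-$r$ decomposition with $V$, apply the Pieri rule, and collect; the multiplicity of $W^\lambda$ becomes $\sum_\mu f_r^\mu(n)$ with $\mu$ ranging over partitions differing from $\lambda$ by one box and satisfying $l(\mu)\leq n$. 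By the very definition of an up-down tableau, this sum equals $f_{r+1}^\lambda(n)$, closing the induction.

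The hard part is the Pieri rule itself. Proving it from first principles would require either a Weyl character manipulation or Sundaram's insertion/jeu-de-taquin machinery. Since this branching rule is one of the central outputs of \cite{Su}, the natural route in the present expository context is to invoke it rather than re-derive it; the remainder of the argument is then a short bookkeeping induction.
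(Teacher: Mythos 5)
Your argument is correct, but it is worth noting that the paper does not prove this statement at all: it is quoted verbatim from Sundaram \cite{Su}, so there is no internal proof to compare against. Your reduction is the standard one and is sound: the left side is the character of $V^{\otimes r}$ for the defining representation $V$ of $Sp(2n,C)$, the base case $r=0$ matches $f_0^{\emptyset}(n)=1$, and the inductive step correctly identifies the recursion $f_{r+1}^{\lambda}(n)=\sum_{\mu}f_r^{\mu}(n)$ (over $\mu$ differing from $\lambda$ by one box with $l(\mu)\leq n$) with the definition of up-down tableaux whose shapes all have at most $n$ rows; this is exactly why the paper's $f_r^{\lambda}(n)$ carries the length restriction on every intermediate shape, not just the final one. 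The one ingredient you defer, the decomposition $W^{\mu}\otimes V\cong\bigoplus_{\nu}W^{\nu}$ over partitions $\nu$ obtained by adding or removing a single box with $l(\nu)\leq n$, is genuinely needed but is not as hard as you suggest: the highest weight $\omega_1$ of $V$ is minuscule for type $C_n$, so the general minuscule tensor product rule (or the Brauer--Klimyk formula, which has no cancellations here) gives it directly, with the dominance condition on $\mu\pm e_i$ translating precisely into ``add or remove a box and stay a partition with at most $n$ parts.'' So your proof is complete modulo a standard citation, and it is arguably more self-contained than the paper's treatment; what the paper's citation of \cite{Su} buys instead is a uniform combinatorial framework (insertion algorithms and explicit bijections with up-down tableaux) that the author reuses for the orthogonal and unitary cases in Sections \ref{orthogr} and \ref{unitgr}.
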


Theorems \ref{com} and \ref{sec} immediately yield the following result. In its statement, $Tr$ denotes trace.

\begin{theorem} \label{firthm} Let $G=Sp(2n,C)$ and let $k$ and $r$ be fixed. Let $x_1,\cdots,x_k$ and $y_1,\cdots, y_k$ be
chosen independently from the Haar measure of $G$. Then

\begin{enumerate}
\item \[ \int_{G^{2k}} [Tr([x_1,y_1] \cdots [x_k,y_k])]^r dx_1 \cdots dx_n dy_1 \cdots dy_n =
\sum_{\lambda \atop l(\lambda) \leq n} \frac{f_r^{\lambda}(n)}{d_{\lambda}^{2k-1}}.\]

\item For sufficiently large $n$,  \[  \int_{G^{2k}} [Tr([x_1,y_1] \cdots [x_k,y_k])]^r dx_1 \cdots dx_n dy_1 \cdots dy_n  = \sum_{\lambda} \frac{f_r^{\lambda}}{d_{\lambda}^{2k-1}}.\]
\end{enumerate} 

\end{theorem}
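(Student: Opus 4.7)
The plan is to combine Theorem~\ref{com} with Theorem~\ref{sec}. Recall that every element $g\in Sp(2n,\CC)$ is conjugate to a diagonal element with eigenvalues $x_1^{\pm 1},\ldots,x_n^{\pm 1}$, so $Tr(g) = x_1+x_1^{-1}+\cdots+x_n+x_n^{-1}$ on the maximal torus, and this formula extends to a class function on all of $G$. Taking $r$-th powers and invoking Theorem~\ref{sec}, I get the expansion
\[ [Tr(g)]^r \;=\; \sum_{\lambda\,:\,l(\lambda)\leq n} f_r^{\lambda}(n)\,\chi_{sp}^{\lambda}(g) \]
as an identity of class functions on $G$.

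For part (1), I would substitute $g = [x_1,y_1]\cdots[x_k,y_k]$, integrate termwise over $G^{2k}$ against Haar measure (the sum is finite so interchanging is automatic), and apply Theorem~\ref{com} to each irreducible character appearing:
\[ \int_{G^{2k}} \chi_{sp}^{\lambda}\bigl([x_1,y_1]\cdots[x_k,y_k]\bigr)\, dx_1\,dy_1\cdots dx_k\,dy_k \;=\; \frac{1}{d_\lambda^{2k-1}}. \]
Summing gives the stated identity.

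For part (2), I would observe that in any up-down tableau $\emptyset=\lambda^0,\lambda^1,\ldots,\lambda^r=\lambda$ each $\lambda^i$ differs from its predecessor by one box, so $|\lambda^i|\leq i\leq r$, and in particular $l(\lambda^i)\leq|\lambda^i|\leq r$. Consequently, $f_r^\lambda=0$ unless $|\lambda|\leq r$, and for any such $\lambda$ the length restriction on intermediate shapes is automatically satisfied as soon as $n\geq r$; equivalently $f_r^\lambda(n)=f_r^\lambda$ for all $\lambda$ once $n\geq r$. Thus for $n\geq r$ the sum in part (1) coincides with the unrestricted sum in part (2).

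There is no real obstacle here: the theorem is an immediate packaging of Theorems~\ref{com} and \ref{sec}. The only genuine content to verify is the stabilization argument in part (2), which amounts to the bookkeeping observation that up-down tableaux of length $r$ visit only partitions of size $\leq r$.
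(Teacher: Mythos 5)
Your proposal is correct and takes exactly the route the paper intends: the paper gives no written proof, stating only that Theorems~\ref{com} and \ref{sec} ``immediately yield'' the result, and your argument supplies precisely those details (expand $[Tr(g)]^r$ via Sundaram's identity, integrate termwise, apply Theorem~\ref{com}, and note that $f_r^\lambda(n)=f_r^\lambda$ once $n\geq r$ because up-down tableaux of length $r$ only visit shapes of size, hence length, at most $r$). The stabilization bookkeeping in part (2) is the only nontrivial step and you have it right.
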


As a consequence of the previous theorem, we obtain a central limit theorem for the trace of the product of $k$ commutators of random elements
of $Sp(2n,C)$. This is stated in \cite{PBN} for $k=1$ but their paper only provides details for the unitary and orthogonal groups (and their
method is different). However we suspect that Theorem \ref{mainsym} may be known.

\begin{theorem} \label{mainsym} As $n \rightarrow \infty$, the trace of the product $[x_1,y_1] \cdots [x_k,y_k]$ where $x_1,\cdots,x_k$ and $y_1,\cdots,y_k$ are
chosen independently from Haar measure of $Sp(2n,C)$ tends to a standard normal. \end{theorem}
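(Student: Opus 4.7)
The plan is to apply the method of moments to the explicit formula of Theorem \ref{firthm}(2): for each fixed $r \geq 0$ and all $n$ sufficiently large, $E\bigl[\mathrm{Tr}([x_1,y_1]\cdots[x_k,y_k])^r\bigr] = \sum_\lambda f_r^\lambda / d_\lambda^{2k-1}$, a finite sum (only partitions of size at most $r$ can appear, since each step of an up-down tableau changes the box count by one). Since the trace is real-valued for the compact symplectic group and the standard Gaussian is moment-determined, it suffices to show that this expression tends to the $r$-th moment of $N(0,1)$ as $n \to \infty$.

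The first step is to show that in the limit only the $\lambda = \emptyset$ summand survives. For each fixed nonempty $\lambda$, Weyl's dimension formula expresses $d_\lambda$ (the dimension of $\chi_{sp}^\lambda$) as a polynomial in $n$ of strictly positive degree, so $d_\lambda \to \infty$. Since $2k - 1 \geq 1$ and the sum has only finitely many terms, all nontrivial contributions vanish in the limit, leaving $f_r^\emptyset / d_\emptyset^{2k-1} = f_r^\emptyset$, because the trivial character has dimension one.

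It remains to identify $f_r^\emptyset$ with the $r$-th moment of $N(0,1)$. A parity argument gives $f_r^\emptyset = 0$ for $r$ odd, matching the vanishing odd Gaussian moments. For $r = 2m$ even, the cleanest route is to apply Theorem \ref{sec} under Haar measure on $G = Sp(2n,\mathbb{C})$: Schur orthogonality kills every summand except the trivial one, giving $\int_G \mathrm{Tr}^{2m}\,dg = f_{2m}^\emptyset$, and then Diaconis and Shahshahani's classical central limit theorem forces $f_{2m}^\emptyset = (2m-1)!!$; this is an equality rather than a limit, since $f_{2m}^\emptyset$ is $n$-independent for $n \geq m$. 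A purely combinatorial alternative would be Sundaram's bijection between length-$2m$ up-down tableaux of empty shape and perfect matchings of $[2m]$. The main point requiring care is the growth $d_\lambda \to \infty$ for each fixed nonempty $\lambda$, which is immediate from Weyl's dimension formula, so I do not anticipate any serious obstacle; Theorem \ref{firthm} has already done the real work.
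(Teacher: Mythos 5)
Your proposal is correct and follows essentially the same route as the paper: method of moments applied to Theorem \ref{firthm}(2), with the $\lambda=\emptyset$ term supplying the Gaussian moments and the finitely many nonempty terms vanishing because $d_{\lambda}$ grows at least linearly in $n$ (the paper uses the El-Samra--King form of the Weyl dimension formula for this). The only cosmetic difference is that you identify $f_{2m}^{\emptyset}=(2m-1)(2m-3)\cdots(3)(1)$ by combining Schur orthogonality with the Diaconis--Shahshahani moment computation (or Sundaram's matching bijection), whereas the paper cites Theorem 8.7 of \cite{St}.
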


\begin{proof} We use the method of moments \cite{Di}. By part 2 of Theorem \ref{firthm}, it is enough to show that for $r$
fixed and $n \rightarrow \infty$, the quantity $\sum_{\lambda} \frac{f_r^{\lambda}}{d_{\lambda}^{2k-1}}$ tends to $0$ if $r$
is odd and to $(r-1)(r-3) \cdots (3)(1)$ if $r$ is even.

For any $n$, the number of non-zero terms in $\sum_{\lambda} \frac{f_r^{\lambda}}{d_{\lambda}^{2k-1}}$ is upper bounded in terms
of $r$. 

Consider the term corresponding to $\lambda=\emptyset$. From Theorem 8.7 of \cite{St} this term is equal to $0$ if $r$
is odd and to $(r-1)(r-3) \cdots (3)(1)$ if $r$ is even. So to complete the proof, it suffices to show that for all $\lambda \neq
\emptyset$, \begin{equation} \label{toshow} \frac{f_r^{\lambda}}{d_{\lambda}^{2k-1}} \leq A_r/n \end{equation} for a constant
$A_r$ depending on $r$ but not on $n$. Clearly $f_r^{\lambda} \leq (2r)^r$, since for each shape in the sequence
$\emptyset=\lambda^0, \lambda^1,\cdots,\lambda^r=\lambda$  there are two choices (to add or remove a box from the
previous shape) and at most $r$ choices for the row to which the box is added or removed.

So to prove \eqref{toshow} it is enough to show that if $f_r^{\lambda} \neq 0$ and $\lambda \neq \emptyset$, then $d_{\lambda} \geq B_r n$ for a
non-zero constant $B_r$ depending on $r$ but not on $n$. Since $f_r^{\lambda} \neq 0$ implies that the size of $\lambda$ is at
most $r$, this follows immediately from a formula of El-Samra and King \cite{EK} (Theorem 4.5 of \cite{Su}) which states that for $n \geq l(\lambda)$
\[ d_{\lambda} = \prod_{x \in \lambda} \frac{2n-t(x)}{h(x)}. \] Here the product is over the boxes of the diagram of $\lambda$
and $t(x)$ and $h(x)$ are combinatorial quantities upper and lower bounded in terms of $r$. This completes the proof.
\end{proof}

{\it Remark:} The proof of Theorem \ref{mainsym} shows that the expected value of the trace of  $[x_1,y_1] \cdots [x_k,y_k]$
is $O(1/n^{2k-1})$. A similar remark applies to the orthogonal and unitary cases and is consistent with results of Magee and Puder
\cite{MP1},\cite{MP2}.

\section{Orthogonal groups} \label{orthogr}

In this section we study the trace of the product of commutators for the orthogonal groups. We choose to work with the special orthogonal
groups $SO(n,C)$ since in this case the crystal clear exposition of Sundaram \cite{Su} is perfectly suited to our needs. Our results can also
be modified to treat the groups $O(n,C)$, using results of Proctor \cite{Pr}.

To begin we treat the even dimensional groups $SO(2n,C)$. Theorem \ref{Sunorth} is Theorem 3.14 of Sundaram \cite{Su}.

\begin{theorem} \label{Sunorth} For all integers $r \geq 0$ and $n>r$,
\[ (x_1+x_1^{-1} + \cdots + x_n+x_n^{-1})^r = \sum_{\lambda} f_r^{\lambda} \chi_{so}^{\lambda}(x_1^{\pm 1},\cdots,x_n^{\pm 1}),\]
where $f_r^{\lambda}$ is as in Section \ref{symplec} and $\chi_{so}^{\lambda}$ is the irreducible character of $SO(2n,C)$ corresponding to $\lambda$.
\end{theorem}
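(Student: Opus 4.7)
The plan is to interpret the identity representation-theoretically. The expression $x_1+x_1^{-1}+\cdots+x_n+x_n^{-1}$ is the character of the standard (vector) representation $V$ of $SO(2n,\mathbb{C})$ evaluated on the maximal torus element $\mathrm{diag}(x_1,\ldots,x_n,x_1^{-1},\ldots,x_n^{-1})$, so the left hand side equals the character of the tensor power $V^{\otimes r}$ on that torus element. The theorem therefore asserts that, in the stable range $n>r$, the multiplicity of the irreducible module $W_\lambda$ (with character $\chi_{so}^\lambda$) in $V^{\otimes r}$ is exactly $f_r^\lambda$.

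First I would invoke (or rederive via the Weyl character formula) the Pieri-type branching rule for the vector representation: for any partition $\mu$ with $l(\mu)<n$,
$$V\otimes W_\mu \;\cong\; \bigoplus_{\nu} W_\nu,$$
where $\nu$ ranges over all partitions obtained from $\mu$ by either adding or removing a single box. This is the $SO(2n)$ analog of the Pieri rule that underlies Sundaram's combinatorics, and is the exact reason up-down tableaux (rather than ordinary standard tableaux) enter the picture.

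Next I would argue by induction on $r$. The base case $r=0$ is immediate: $V^{\otimes 0}$ is the trivial module $W_\emptyset$, and there is a unique empty up-down tableau. For the inductive step, write $V^{\otimes r}=V\otimes V^{\otimes (r-1)}$ and apply the branching rule to each summand coming from the inductive decomposition of $V^{\otimes (r-1)}$. The multiplicity of $W_\lambda$ on the right becomes the number of sequences $\emptyset=\lambda^0,\lambda^1,\ldots,\lambda^r=\lambda$ whose consecutive shapes differ by exactly one box, which is precisely $f_r^\lambda$.

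The main obstacle, and the reason for the hypothesis $n>r$, is that the branching rule as stated above holds cleanly only when $l(\mu)<n$; in longer partitions, ``remove a box'' can fail and, for $SO(2n,\mathbb{C})$, one can run into sign issues or collisions coming from the conjugation symmetry at the extreme row. Because every partition that can arise in a length-$r$ up-down sequence has size at most $r$, and hence length at most $r$, the bound $n>r$ keeps us safely inside the stable range at every stage of the induction. This is also what lets the answer be the unrestricted count $f_r^\lambda$, rather than the truncated $f_r^\lambda(n)$ that appeared in the symplectic Theorem~\ref{sec}.
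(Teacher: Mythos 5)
The paper does not actually prove Theorem \ref{Sunorth}; it is quoted verbatim as Theorem 3.14 of Sundaram \cite{Su}, so there is no in-paper argument to compare against. Your proposal is a correct and essentially self-contained derivation of the cited result, and it is the classical character-theoretic route: identify the left side as the character of $V^{\otimes r}$ for the vector representation $V$, use the stable Pieri/branching rule $V\otimes W_\mu \cong \bigoplus_{\nu=\mu\pm\square} W_\nu$, and induct on $r$ so that the multiplicity of $W_\lambda$ becomes the number of up-down tableaux of shape $\lambda$ and length $r$. Your explanation of the role of the hypothesis $n>r$ is also the right one (every shape occurring has size, hence length, at most $r<n$, so one stays in the stable range where no modification rules or splittings of $O(2n)$-irreducibles into pairs of $SO(2n)$-irreducibles intervene); this matches the paper's remark following the theorem that the $\chi_{so}^{\lambda}$ with $f_r^{\lambda}\neq 0$ are genuinely irreducible. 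The only soft spot is that you invoke the branching rule without proof; that is a standard fact (obtainable from the Weyl character formula or Brauer's tensor product formula), but strictly speaking the non-stable version requires care about cancellation of non-dominant terms, and a complete write-up should either cite it precisely or carry out the Weyl-character computation. Sundaram's own treatment is more combinatorial (built around insertion algorithms and explicit bijections), which yields stronger bijective information, whereas your argument buys only the multiplicity identity --- but that is all this paper needs.
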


{\it Remark:} As explained in \cite{Su}, since $n>r$, the characters $\chi_{so}^{\lambda}$ with $f_r^{\lambda} \neq 0$ are indeed
irreducible.

This leads to the following central limit theorem.

\begin{theorem} \label{CLTSOeven} Fix $k \geq 1$. As $n \rightarrow \infty$, the trace of the product $[x_1,y_1] \cdots [x_k,y_k]$ where
$x_1,\cdots,x_k$ and $y_1,\cdots,y_k$ are chosen independently from the Haar measure of $SO(2n,C)$ tends to a standard normal.
\end{theorem}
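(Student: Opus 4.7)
The plan is to follow the template of the proof of Theorem \ref{mainsym} essentially verbatim, replacing the symplectic ingredients with their orthogonal analogues. First I would apply the method of moments \cite{Di}. Combining Theorem \ref{com} with Theorem \ref{Sunorth}, the $r$-th moment of the trace, for $n > r$, becomes
\[ \int_{G^{2k}} [Tr([x_1,y_1] \cdots [x_k,y_k])]^r dx_1 \cdots dx_k dy_1 \cdots dy_k = \sum_{\lambda} \frac{f_r^{\lambda}}{d_{\lambda}^{2k-1}}, \]
where $d_{\lambda}$ now denotes the dimension of the irreducible $SO(2n,\CC)$-character $\chi_{so}^{\lambda}$. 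By the remark following Theorem \ref{Sunorth}, for $n > r$ every character appearing in the expansion is genuinely irreducible, so Theorem \ref{com} applies term by term.

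Next, since $f_r^{\lambda}=0$ unless $|\lambda|\le r$, the number of nonzero summands is bounded in terms of $r$ alone. The $\lambda=\emptyset$ contribution equals $0$ when $r$ is odd and $(r-1)(r-3)\cdots(3)(1)$ when $r$ is even by Theorem 8.7 of \cite{St}, matching the Gaussian moments. To finish, one must show that each remaining term is $O(1/n)$. Using the same crude bound $f_r^{\lambda}\le (2r)^r$ as in the proof of Theorem \ref{mainsym}, it suffices to establish that $d_{\lambda} \geq B_r n$ for some positive constant $B_r$ depending only on $r$, whenever $\lambda \neq \emptyset$ has size at most $r$.

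The only genuinely new input is the dimension formula for the relevant $SO(2n,\CC)$-characters. El-Samra and King \cite{EK} give a hook-content type product formula in the orthogonal setting (analogous to the one quoted in the proof of Theorem \ref{mainsym}, and likewise recorded in \cite{Su}), expressing $d_{\lambda}$ as a product over boxes of $\lambda$ whose numerators are linear in $n$ (of the form $2n \pm t(x)$) and whose denominators (hook lengths) are bounded in terms of $|\lambda|$. Since $\lambda$ is nonempty, at least one such factor grows linearly in $n$, while the remaining factors are bounded in terms of $r$, yielding the required estimate. The main obstacle is purely bookkeeping: one must check that for $n$ sufficiently large in terms of $r$ no numerator accidentally vanishes or changes sign for the finitely many $\lambda$ under consideration, after which the asymptotic analysis is mechanical and the central limit theorem follows.
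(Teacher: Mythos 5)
Your proposal is correct and follows essentially the same route as the paper: the paper's proof simply says to argue exactly as for the symplectic groups, with the only new ingredient being the El-Samra--King dimension formula $d_{\lambda} = \prod_{x \in \lambda} \frac{2n+s(x)}{h(x)}$ to get the lower bound $d_{\lambda} \geq B_r n$. Your additional remark about checking that no numerator vanishes for large $n$ is a reasonable piece of bookkeeping that the paper leaves implicit.
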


\begin{proof} We argue exactly as for the symplectic groups. Since $f_r^{\lambda}$ is the same as in the symplectic case, all that is
left to show is that $d_{\lambda} \geq B_r n$ for a non-zero constant $B_r$ depending on $r$ but not on $n$. This follows immediately
from a formula of El-Samra and King \cite{EK} (Theorem 4.5 of \cite{Su}) which states that for $n \geq l(\lambda)$,
\begin{equation} \label{starone} d_{\lambda} = \prod_{x \in \lambda} \frac{2n + s(x)}{h(x)} \end{equation} for certain quantities
$s(x)$ and $h(x)$ which are upper and lower bounded in terms of $r$. \end{proof}

The identical result holds for odd dimensional orthogonal groups.

\begin{theorem} Fix $k \geq 1$. As $n \rightarrow \infty$, the trace of the product $[x_1,y_1] \cdots [x_k,y_k]$ where
$x_1,\cdots,x_k$ and $y_1,\cdots,y_k$ are chosen independently from the Haar measure of $SO(2n+1,C)$ tends to a standard normal.
\end{theorem}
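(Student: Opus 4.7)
The plan is to imitate the proof of Theorem~\ref{CLTSOeven} line by line, with one adjustment for odd dimension: the standard character of $SO(2n+1,\mathbb{C})$ is $1 + \sum_{i=1}^n(x_i + x_i^{-1})$ rather than $\sum_{i=1}^n(x_i + x_i^{-1})$. Accordingly, the first step is to establish an analog of Theorem~\ref{Sunorth} of the form
\[ \Bigl(1 + \sum_{i=1}^n(x_i + x_i^{-1})\Bigr)^r = \sum_\lambda g_r^\lambda \, \chi^\lambda_{so(2n+1)}\bigl(x_1^{\pm 1},\ldots,x_n^{\pm 1}\bigr), \]
valid for $n$ sufficiently large, where $g_r^\lambda$ is the multiplicity of the irreducible character $\chi^\lambda_{so(2n+1)}$ in the $r$-fold tensor power $V^{\otimes r}$ of the standard representation. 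Such an expansion is the $B_n$ counterpart of Sundaram's theorem and can also be read off from Schur--Weyl--Brauer duality in the stable range; the coefficients $g_r^\lambda$ vanish unless $|\lambda| \leq r$, so the sum has boundedly many nonzero terms in $n$.

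Combining this expansion with Theorem~\ref{com} yields, for $n$ large,
\[ \int_{G^{2k}} \bigl[\mathrm{Tr}([x_1,y_1]\cdots[x_k,y_k])\bigr]^r\, dx_1\cdots dy_k = \sum_\lambda \frac{g_r^\lambda}{d_\lambda^{2k-1}}, \]
after which I would apply the method of moments. The leading term $g_r^\emptyset$ equals the dimension of the $SO(2n+1)$-invariants in $V^{\otimes r}$; by the first fundamental theorem of invariant theory for the orthogonal group, these are spanned by products of the invariant bilinear form, so this dimension equals $(r-1)(r-3)\cdots 1$ for even $r$ and $0$ for odd $r$, matching exactly the moments of $N(0,1)$. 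The remaining terms are controlled via the $B_n$ analog of the El-Samra--King dimension formula (equivalently the Weyl dimension formula specialized to type $B_n$), which gives $d_\lambda \geq B_r n$ whenever $\lambda \neq \emptyset$ and $|\lambda| \leq r$, so each such term is $O(1/n^{2k-1})$.

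The main obstacle is the first step: Theorem~\ref{Sunorth} as quoted treats only the $D_n$ case, so one must either cite Sundaram's companion work on $SO(2n+1)$ oscillating tableaux, or derive the expansion directly from Brauer algebra stability for $n > r$ (where the multiplicities $g_r^\lambda$ coincide with dimensions of cell modules of $B_r(2n+1)$). Once the character expansion is in hand, the rest of the argument is essentially copied from the proof of Theorem~\ref{CLTSOeven}, and the method of moments concludes.
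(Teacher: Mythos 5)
Your proposal follows the paper's proof essentially verbatim: the character expansion of $(1+\sum_i(x_i+x_i^{-1}))^r$ that you flag as the main obstacle is exactly Theorem 3.15 of Sundaram's survey (proved in her companion paper on $SO(2n+1)$ insertion), which the paper cites, and the rest of your argument (Theorem \ref{com}, method of moments, the $(r-1)(r-3)\cdots 1$ count for the empty shape, and the El-Samra--King dimension bound $d_\lambda \geq B_r n$) matches the paper's. The only cosmetic difference is that you compute the $\lambda=\emptyset$ multiplicity via the first fundamental theorem of invariant theory rather than via the enumeration of up-down tableaux, which gives the same answer in the stable range.
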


\begin{proof} From Theorem 3.15 of \cite{Su} (proved in \cite{Su2}), for all $r \geq 0$ and large enough $n$,
\[ (x_1+x_1^{-1} + \cdots + x_n+x_n^{-1} + 1)^r = \sum_{\lambda} f_r^{\lambda} \chi_{so}^{\lambda}(x_1^{\pm 1},\cdots,x_n^{\pm 1},1),\]
where  $f_r^{\lambda}$ is as in Section \ref{symplec} and $\chi_{so}^{\lambda}$ is the irreducible character of $SO(2n+1,C)$ corresponding to $\lambda$. So we can argue as in the even dimensional case, and we are done since for $n \geq l(\lambda)$ the dimension formula \eqref{starone} holds
with $2n$ replaced by $2n+1$.
\end{proof}

\section{Unitary groups} \label{unitgr}

This section treats the unitary groups $U(n,C)$. Unlike the symplectic and orthogonal cases, here the trace of the product of
commutators converges to a standard complex normal. (A random variable $Z$ is said to be a standard complex normal if
$Z=X+iY$ where $X$ and $Y$ are independent real normals with mean $0$ and variance $1/2$).

We will make extensive use of Stembridge's lovely paper \cite{Stem} and begin with some definitions from that paper.

{\bf Definition:} We define a staircase $\gamma$ of height $n$ to be an integer sequence
$\gamma_1 \geq \cdots \geq \gamma_n$. We define $|\gamma|= \gamma_1+\cdots+\gamma_n$.

It is known (Chapter 38 of \cite{Bu}) that staircases of height $n$ index the irreducible representations of $U(n,C)$. We denote
the corresponding character by $\chi^{\gamma}_{u}(x_1,\cdots,x_n)$.

The following definition is also from \cite{Stem}.

{\bf Definition:} An up-down staircase tableau $T$ of length $j$ and shape $\gamma$ is a sequence $\emptyset=\gamma^0, \gamma^1,
\cdots,\gamma^j=\gamma$ of staircases of height $n$ in which either $\gamma^i \supset \gamma^{i-1}$ ,$|\gamma^i| - |\gamma^{i-1}|=1$
or $\gamma^i \subset \gamma^{i-1}$ , $|\gamma^i| - |\gamma^{i-1}|=-1$ for all $1 \leq i \leq j$. The tableau $T$ is said to be of
type $\epsilon=(\epsilon_1,\cdots,\epsilon_j)$ where $\epsilon_i=|\gamma^i|-|\gamma^{i-1}|$. We let $c_j^{\gamma}(\epsilon)$ 
be the number of of up-down staircase tableaux of length $j$, shape $\gamma$, and type $\epsilon$.

As an example, the sequence
\[ \emptyset, (0,0,-1), (1,0,-1), (1,0,0), (1,1,0), (1,1,-1) \] defines an up-down staircase tableau of length 5, type $(-1,1,1,1,-1)$ and
shape $(1,1,-1)$.

Theorem \ref{complexdec} is equation 9 of \cite{Stem}.

\begin{theorem} \label{complexdec} For all integers $n \geq 1$ and $r,s >0$,
\[ (x_1+\cdots+x_n)^r (x_1^{-1}+\cdots+x_n^{-1})^s = \sum_{\gamma} c^{\gamma}_{r+s}(\epsilon) \chi^{\gamma}_u(x_1,\cdots,x_n),\]
where the sum is over all staircases $\gamma$ of height $n$ (so over irreducible representations of $U(n,C)$), and $\epsilon$ denotes any fixed
type vector with $+1$ occurring $r$ times and $-1$ occurring $s$ times. \end{theorem}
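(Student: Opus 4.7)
The plan is to interpret both sides as characters of representations of $U(n,\mathbb{C})$ and identify them by iterating a branching rule. Let $V$ be the defining $n$-dimensional representation, with character $x_1+\cdots+x_n$, so that the dual $V^*$ has character $x_1^{-1}+\cdots+x_n^{-1}$. Then the left-hand side is the character of $V^{\otimes r}\otimes (V^*)^{\otimes s}$, and the theorem reduces to computing the multiplicity of $\chi^{\gamma}_u$ in this tensor product and matching it with $c^{\gamma}_{r+s}(\epsilon)$.

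The essential input is the Pieri rule for $U(n,\mathbb{C})$ in the staircase setting (classical; see \cite{Bu}): for any staircase $\delta$ of height $n$, the tensor product $V\otimes \chi^{\delta}_u$ decomposes as the sum of $\chi^{\delta'}_u$ over those staircases $\delta'$ obtained from $\delta$ by increasing exactly one entry by $1$, and $V^*\otimes \chi^{\delta}_u$ is the analogous sum with ``decreasing'' in place of ``increasing''. Moves that would violate the weakly decreasing condition simply do not appear. Note that the condition $|\delta'|-|\delta|=\pm 1$ in the definition of an up-down staircase tableau exactly encodes this unit change in one coordinate.

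Fix a type vector $\epsilon=(\epsilon_1,\ldots,\epsilon_{r+s})$ with $r$ entries equal to $+1$ and $s$ entries equal to $-1$. Because the tensor product is commutative up to isomorphism, I may rewrite $V^{\otimes r}\otimes(V^*)^{\otimes s}$ as the ordered tensor product of $r+s$ factors whose $i$-th factor is $V$ if $\epsilon_i=+1$ and $V^*$ if $\epsilon_i=-1$. Proceeding by induction on $j$, suppose that the representation formed by the first $j$ factors decomposes so that $\chi^{\delta}_u$ appears with multiplicity equal to the number of up-down staircase tableaux of length $j$, type $(\epsilon_1,\ldots,\epsilon_j)$, and shape $\delta$. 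Applying the Pieri rule above to the $(j+1)$-st factor and summing over $\delta$ exactly matches extending such a tableau by one step of type $\epsilon_{j+1}$, which advances the induction. Taking $j=r+s$ and passing to characters gives the stated identity.

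The only substantive obstacle is the Pieri rule in the staircase (rather than partition) setting; once that is in hand, the induction is routine bookkeeping. As a sanity check, the commutativity of the tensor product implies a posteriori that $c^{\gamma}_{r+s}(\epsilon)$ depends only on the multiset of signs in $\epsilon$, which is consistent with the theorem's assertion that any fixed type vector of the prescribed composition yields the same decomposition.
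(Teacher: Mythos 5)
The paper does not prove this statement at all: it is quoted verbatim as equation 9 of Stembridge's paper \cite{Stem}, so there is no in-text argument to compare against. Your proof is correct and is essentially the standard representation-theoretic derivation of that identity. The two Pieri-type rules you invoke --- $V\otimes\chi^{\delta}_u=\sum\chi^{\delta+e_i}_u$ and $V^*\otimes\chi^{\delta}_u=\sum\chi^{\delta-e_i}_u$, the sums running over those $i$ for which the resulting sequence is still weakly decreasing --- are classical for the rational (staircase-indexed) representations of $U(n,\mathbb{C})$; the second follows from the first by dualizing, since $\chi^{\delta}_u{}^*=\chi^{(-\delta_n,\ldots,-\delta_1)}_u$. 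Your induction on the number of tensor factors then matches multiplicities with up-down staircase tableaux step by step, exactly as the definition of $c^{\gamma}_j(\epsilon)$ demands, and your closing observation that commutativity of the tensor product forces $c^{\gamma}_{r+s}(\epsilon)$ to depend only on the multiset of signs cleanly justifies the phrase ``any fixed type vector'' in the statement (a fact Stembridge also establishes, by more combinatorial means). In short: where the paper outsources this result, you have supplied a self-contained and correct proof; the only ingredient you take on faith, the staircase Pieri rule, is standard and can be found in \cite{Bu}.
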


The following theorem is an immediate consequence of Theorem \ref{com}, Theorem \ref{complexdec}, and the fact
that all eigenvalues $x_i$  of a unitary matrix lie on the unit circle (so that $x_i^{-1} = \overline{x_i}$). Here, and throughout the remainder
 of this section, $\overline{z}$ denotes the complex conjugate of a complex number $z$.

\begin{theorem} \label{ana} Let $G=U(n,C)$ and let $k,r,s$ be fixed. Let $x_1,\cdots,x_k$ and $y_1,\cdots,y_k$ be chosen
independently from the Haar measure of $G$. Then 
\begin{eqnarray*}
& & \int_{G^{2k}} [Tr([x_1,y_1] \cdots [x_k,y_k])]^r \overline{[Tr([x_1,y_1] \cdots [x_k,y_k])] }^s dx_1 dy_1 \cdots dx_k dy_k\\
& = & \sum_{\gamma} \frac{c_{r+s}^{\gamma}(\epsilon)}{d_{\gamma}^{2k-1}}.
\end{eqnarray*}
\end{theorem}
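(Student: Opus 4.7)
The plan is to expand $[Tr(W)]^r \overline{[Tr(W)]}^s$, with $W := [x_1,y_1] \cdots [x_k,y_k]$, into a linear combination of irreducible characters of $U(n,C)$ via Theorem \ref{complexdec}, and then integrate term by term with Theorem \ref{com}. Let $\lambda_1, \ldots, \lambda_n$ denote the eigenvalues of the unitary matrix $W$. The key observation is that $|\lambda_i|=1$ forces $\overline{\lambda_i} = \lambda_i^{-1}$, so
\[
[Tr(W)]^r \overline{[Tr(W)]}^s = (\lambda_1 + \cdots + \lambda_n)^r (\lambda_1^{-1} + \cdots + \lambda_n^{-1})^s.
\]

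Next, I would apply Theorem \ref{complexdec} with $x_i = \lambda_i$ to rewrite the right-hand side as
\[
\sum_{\gamma} c^{\gamma}_{r+s}(\epsilon) \, \chi^{\gamma}_u(\lambda_1, \ldots, \lambda_n),
\]
for any fixed type vector $\epsilon$ with $r$ entries $+1$ and $s$ entries $-1$. Since $\chi^{\gamma}_u$ is a class function on $U(n,C)$, this equals $\chi^{\gamma}_u(W)$. Only finitely many $\gamma$ contribute, because $c^{\gamma}_{r+s}(\epsilon)$ vanishes unless $\gamma$ is reachable from the empty staircase in $r+s$ moves of size $\pm 1$, so both $|\gamma|$ and the support of $\gamma$ are bounded in terms of $r+s$.

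Finally, I would integrate the resulting identity
\[
[Tr(W)]^r \overline{[Tr(W)]}^s = \sum_{\gamma} c^{\gamma}_{r+s}(\epsilon) \, \chi^{\gamma}_u(W)
\]
over $G^{2k}$ and apply Theorem \ref{com} to each irreducible character $\chi^{\gamma}_u$, which contributes $1/d_{\gamma}^{2k-1}$, yielding the claimed formula. There is no genuine obstacle here: the only conceptual point is lining up the unit-circle identity $\overline{\lambda_i} = \lambda_i^{-1}$ with the power-sum product expanded in Theorem \ref{complexdec}, after which Theorem \ref{com} supplies the character integrals. The finiteness of the relevant sum justifies exchanging sum and integral without any convergence concerns.
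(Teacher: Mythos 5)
Your proposal is correct and follows exactly the route the paper takes (the paper simply declares the theorem an immediate consequence of Theorem \ref{com}, Theorem \ref{complexdec}, and the unit-circle identity $\overline{x_i}=x_i^{-1}$, which is precisely the chain of steps you spell out). The only difference is that you write out the details — evaluating the character identity on the eigenvalues of $W$, invoking the class-function property, and noting the finiteness of the sum — which the paper leaves implicit.
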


Now we come to the main result of this section.

\begin{theorem} Fix $k \geq 1$. As $n \rightarrow \infty$, the trace of the product $[x_1,y_1] \cdots [x_k,y_k]$ where
$x_1,\cdots,x_k$ and $y_1,\cdots,y_k$ are chosen independently from Haar measure of $U(n,C)$ tends to a standard complex normal.

\begin{proof} It is known (Lemma 1 of \cite{DS}) that if $X$ is a standard complex normal, then for all natural numbers $r,s$,
$E[X^r \overline{X}^s] = \delta_{r,s} r!$. So from Theorem \ref{ana}, it is enough to show that as $n \rightarrow \infty$
\begin{equation} \label{star} \sum_{\gamma}  \frac{c_{r+s}^{\gamma}(\epsilon)}{d_{\gamma}^{2k-1}} \end{equation}
tends to $r!$ if $r=s$ and to $0$ otherwise.

First consider the case $r=s$. Consider the term $\gamma=\emptyset$. Then $d_{\emptyset}=1$ and by Proposition 4.8
of \cite{Stem}, $c_{r+s}^{\emptyset}(\epsilon)=r!$ for large enough $n$. Consider the other terms in \eqref{star}. The
number of non-zero terms is upper bounded in terms of $r$, and each $c^{\gamma}_{r+s}(\epsilon)$ is also upper bounded
in terms of $r$. So it suffices to show that if $c^{\gamma}_{r+s}(\epsilon) \neq 0$ and $\gamma \neq \emptyset$, then
$d_{\gamma} \geq B_r n$ for a non-zero constant $B_r$ depending on $r$ but not on $n$. This follows immediately
from the dimension formula for irreducible representations of $GL(n,C)$ (see Theorem 4.4 of \cite {Su}) together
with the fact (Chapter 38 of \cite{Bu}) that the irreducible characters of $U(n,C)$ are given by multiplying the irreducible
characters of $GL(n,C)$ by a power of $x_1 \cdots x_n$, which doesn't change their dimension.

The second case is that $r \neq s$. Note that $c_{r+s}^{\emptyset}(\epsilon)=0$. As in the $r=s$ case, the number of non-zero
terms in \eqref{star} is upper bounded in terms of $r$ and $s$, and each $c_{r+s}^{\gamma}(\epsilon)$ is upper bounded in
terms of $r$ and $s$. So it suffices to show that if $n$ if large enough and $c_{r+s}^{\gamma}(\epsilon) \neq 0$,
then $d_{\gamma} \geq B_{r,s} n$
for a non-zero constant $B_{r,s}$ depending on $r$ and $s$ but not on $n$, and this follows as in the $r=s$ case.
\end{proof}

\end{theorem}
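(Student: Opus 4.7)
The plan is to apply the method of moments, as in the symplectic and orthogonal cases, but now adapted to a complex-valued target. Since a standard complex normal $X$ is characterized by $E[X^r \overline{X}^s] = \delta_{r,s}\, r!$ (Lemma 1 of \cite{DS}), and the trace of a unitary matrix $U$ satisfies $\overline{Tr(U)} = Tr(U^{-1})$, I would show that for each fixed $r, s \geq 0$ the mixed moment $E\bigl[(Tr(W))^r \overline{(Tr(W))}^s\bigr]$ of $W = [x_1,y_1] \cdots [x_k,y_k]$ converges to $\delta_{r,s}\, r!$ as $n \to \infty$.

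The first step is to invoke Theorem \ref{ana}, which already expresses this mixed moment as
\[
\sum_\gamma \frac{c_{r+s}^\gamma(\epsilon)}{d_\gamma^{2k-1}},
\]
where $\epsilon$ has $r$ plus-ones and $s$ minus-ones. I would then separate out the contribution of $\gamma = \emptyset$, for which $d_\emptyset = 1$. By Proposition 4.8 of Stembridge \cite{Stem}, for $n$ sufficiently large $c_{r+s}^\emptyset(\epsilon)$ equals $r!$ when $r = s$ and $0$ when $r \neq s$, which gives precisely the desired limiting complex-Gaussian moments. The second equality is morally forced by the fact that an up-down walk on staircases can return to $\emptyset$ only when the number of $+1$ steps and the number of $-1$ steps match.

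What remains is to show that the tail $\sum_{\gamma \neq \emptyset} c_{r+s}^\gamma(\epsilon)/d_\gamma^{2k-1}$ vanishes. Two observations make the numerator harmless: any $\gamma$ with $c_{r+s}^\gamma(\epsilon) \neq 0$ differs from $\emptyset$ by at most $r+s$ box moves, so only boundedly many staircases contribute; and once $n$ is large enough, each coefficient $c_{r+s}^\gamma(\epsilon)$ is bounded by a constant depending only on $r+s$. The main obstacle, as in the orthogonal and symplectic cases, is a uniform lower bound $d_\gamma \geq B_{r,s}\, n$ for every nonempty $\gamma$ reachable in such a walk. For this I would use the identification (Chapter 38 of \cite{Bu}) of irreducible characters of $U(n,\CC)$ with those of $GL(n,\CC)$ up to a power of $x_1 \cdots x_n$, which preserves dimension, and then invoke the Weyl-type dimension formula (Theorem 4.4 of \cite{Su}): since the nonzero entries of $\gamma$ are bounded in terms of $r+s$ and $\gamma \neq \emptyset$ contributes at least one box, the corresponding linear-in-$n$ factor supplies the required growth. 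Feeding these estimates back shows the tail is $O(1/n^{2k-1})$, so the $\gamma = \emptyset$ term dominates and the method of moments yields the claimed complex normal limit.
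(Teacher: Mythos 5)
Your proposal is correct and follows essentially the same route as the paper: the method of moments via Theorem \ref{ana}, isolating the $\gamma=\emptyset$ term using Proposition 4.8 of Stembridge, and killing the remaining (boundedly many, boundedly large) terms via the linear-in-$n$ lower bound on $d_\gamma$ coming from the $GL(n,\mathbb{C})$ dimension formula and the dimension-preserving twist by a power of $x_1\cdots x_n$. No substantive differences from the paper's argument.
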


\section{Acknowledgements} This research was supported by Simons grant 917224. We thank Kefeng Liu, Sheila Sundaram, and Tiancheng Xia
for their help.

\end{document}